\newtheorem{thm}{Theorem}[section]
\newtheorem{prop}[thm]{Proposition}
\theoremstyle{definition}
\newtheorem{dfn}[thm]{Definition}
\newtheorem{ex}[thm]{Example}
\newtheorem{lem}[thm]{Lemma}
\theoremstyle{remark}
\newtheorem{rem}[thm]{Remark}
\theoremstyle{Matsuda's theorem}
\newcommand{\Z}{\mathbb{Z}} 
\newcommand{\B}[1]{\Omega(#1)}
\newcommand{\D}{\mathcal{D}} 
\renewcommand{\P}{\mathcal{P}} 
\newcommand{\sub}[1]{{\rm Sub}({#1})}
\newcommand{\ns}[1]{{\rm NS}({#1})}
\newcommand{\bs}[2]{{\rm S}_{#1}({#2})}
\begin{document}

\begin{frontmatter}
	
	
	
	\title{On the unit groups of partial Burnside rings}
	
	
	\author{Masahiro Wakatake}
	
	\address{Department of Mathematics, Kindai University, 3-4-1 Kowakae, Higashiosaka City, Osaka 577-8502, Japan}
	
	\begin{abstract}
		We describe the generalized Matsuda's theorem in \cite{ma82}, and some results of a Burnside ring extend a partial Burnside ring. In particular, we give isomorphism between partial Burnside rings of different groups. Moreover, we consider the relationship between image of Frobenius-Wielandt homomorphism, a partial Burnside ring, and structure of a group.
		
	\end{abstract}
	
	\begin{keyword}
		Burnside ring
		
		
		
	\end{keyword}
	
\end{frontmatter}
\section{Introduction}
Let $G$ be a finite group. The Burnside ring of a finite group $G$ is defined to be the Grothendieck ring of the semi-ring generated by isomorphism classes of finite (left) $G$-sets where the addition and multiplication are given by disjoint unions and Cartesian products. Denote by $\B{G}$ the Burnside ring of $G$. If a family $\D$ of subgroups of $G$ contains $G$ and it is closed under taking conjugation and intersection, then $\D$ is called a collection of $G$. We call the Grothendieck ring of the semi-ring generated by isomorphism classes of finite (left) $G$-sets whose stabilizers of any element lie in a collection $\D$ of $G$ a partial Burnside ring (PBR for shot) relative to $\D$ of $G$. We denote the PBR relative to a collection $\D$ of $G$ by $\B{G,\D}$.

Results of this paper is the following :

\vspace{5pt}
\noindent{\bf Theorem \ref{teiri1}.} 
	Let $G_1$ and $G_2$ be finite groups, and $\D$ a collection of $G_1$. If $f:G_1\longrightarrow G_2$ is a surjective group homomorphism, then
	\[
	\B{G_1,\D_{\ker f}} \simeq \B{G_2,f(\D_{\ker f})} ,
	\]
	where $f(\D_{\ker f})=\{f(H)\leq G_2\ |\ H\in\D,\ \ker f\leq H\}$ and $\D_{\ker f}=\{H\in\D\ |\ \ker f\leq H\}$.
\vspace{5pt}

\vspace{5pt}
\noindent{\bf Theorem \ref{oddeq}.} 
		Let $G$ be a finite group. Then the following are equivalent.
		\begin{itemize}
			\item[$(1)$] $G$ is non-abelian simple group, then $|G|=$even.
			\item[$(2)$] $|G|=$odd, then $|\B{G}^\times|=2$.
			\item[$(3)$] $|G|=$odd, then $\B{G}^\times\simeq \B{G,\ns{G}}^\times$.
			\item[$(4)$] $|G|=$odd, then $\B{G}^\times\simeq \B{G, \sub{G}_{G'}}^\times$.
		\end{itemize}
		Here, $\ns{G}$ is the set of all normal subgroups of $G$, $G'$ is a commutator subgroup of $G$, and $\sub{G}_{G'}=\{H\leq G\ |\ G'\leq H\}$.
\vspace{5pt}

\noindent{\bf Definition \ref{seminil}.} 
	Let $p$ be a any prime number. For any positive integer $a$, we call a finite group $G$ $p^a$-\emph{seminilpotent} if for any subgroup $K$ of $G$ has index which is divisible by $p^a$, there exists a normal subgroup $N$ of $G$ such that $K\leq N$ and $|G:N|=p^a$, and $\#\{ N \trianglelefteq G \mid K\leq N,\ |G:N|=p^a \}\equiv 1$ mod $p$.
\vspace{5pt}

\noindent{\bf Theorem \ref{imgfw}.} 
	Let $G$ be a finite group of even order $n$, and $C$ be a cyclic group of the same order $n$. if $G$ is $2$-seminilpotent, then 
	\[
	\alpha(\B{C}^\times)\subseteq \B{G,\ns{G}}^\times.
	\]
	In particular, 
	\[
	\alpha(\B{C}^\times) =\braket{-1_{\B{G}},\ \prod_{\{N\leq G;|G:N|=2\}}u_H},
	\]
	where $u_N=1_{\B{G}}-[G/H]$.
\vspace{5pt}

The paper is organized as follows. In Section 2, we recall the basic definitions and results from PBR. In Section 3, we describe the generalized Matsuda's theorem in \cite{ma82}. In Section 4, we show some examples of Matsuda's theorem. In Section 5, we show more results associated with Matsuda's theorem. In Section 6, we consider the relationship between image of Frobenius-Wielandt homomorphism, a partial Burnside ring, and structure of a group.

\section{Preliminaries}
\subsection{Notation}
Let $G$ be a finite group. Denote by $\sub{G}$ the set of subgroups of $G$. Denote by $\ns{G}$ the set of normal subgroups of $G$. For a family $\D$ of subgroups of $G$ with closed under $G$-conjugation, write $\D^c$ for the set of the conjugacy classes of $\D$. Denote by $(H)$ the set of all $G$-conjugate subgroups of a subgroup $H$ of $G$. Denote by $[X]$ the isomorphism class of finite $G$-set $X$. If $X$ is a finite set, write $|X|$ or $\#X$ for the cardinality of $X$. 

\subsection{Partial Burnside rings}
Let $G$ be a finite group. Then the Burnside ring $\B{G}$ of $G$ can be regarded as a free abelian group with basis $\set{[G/H]|(H)\in\sub{G}^c}$. The multiplication in the ring is given by 
\[
[G/H]\cdot[G/K]=\sum_{HgK\in H\backslash G/K} [G/(H\cap gKg^{-1})].
\]
\begin{dfn}
	Let $G$ be a finite group. The family $\D$ of subgroups of $G$ is called a \emph{closed} if $\D$ satisfies the following $2$ conditions:
	\begin{itemize}
		\item $H\cap K\in\D$ for any $H,K\in\D$.
		\item $gHg^{-1}\in\D$ for any $H\in\D$ and $g\in G$.
	\end{itemize}
In particular, we call a closed family $\D$ of subgroups $G$ \emph{collection} if $G\in\D$.
\end{dfn}
For a closed family $\D$ of subgroups of $G$, we put 
\[
\B{G,\D}:= \braket{[G/H]\ |\ (H)\in\D^c}_\Z.
\]
In particular, we call $\B{G,\D}$ a {\em partial Burnside ring} (PBR for short) {\em relative to $\D$ of $G$} if $\D$ is a collection. 
\begin{rem}
	For a closed family $\D$ of subgroups of a finite group $G$, A $\B{G,\D}$ is a subring of $\B{G}$ (it need not unital). A PBR $\B{G,\D}$ is a unitary subring of $\B{G}$.
\end{rem}

For $K\leq G$ and a $G$-set $X$, we set
\[
{\rm inv}_K(X):=\{x\in X|kx=x \text{\ for all\ } k\in K\}.
\]
If $\D$ is a closed family of subgroups of $G$, then for each $K\in\D$, the $\Z$-linear map $\varphi^\D_K:\B{G,\D}\rightarrow\Z$ which is induced by $[G/H]\mapsto \#{\rm inv}_K(G/H)$ is a ring homomorphism. By \cite[Theorem 3.10]{yo90}, the map $\varphi^\D=(\varphi^G_K)_{(K)\in\D^c} : \B{G,\D}\rightarrow \prod_{(K)\in\D^c}\Z$ is an injective ring homomorphism, where $\varphi^\D$ is called a \emph{Burnside homomorphism relative to $\D$ of $G$}. We call a matrix $M^\D:=({\rm inv}_K(G/H))_{(K),(H)\in\D^c}$ a \emph{table of marks relative to $\D$ of $G$}.

\section{A generalization of Matsuda's theorem}
\begin{dfn}
	Let $G$ be a finite group. A collection $S$ of $G$ is called a \emph{basic collection} if $S$ satisfies the following three conditions:
	\begin{itemize}
		\item $\braket{1}\in S$.
		\item If $H\in S$, then $H$ is a normal subgroup of $G$.
		\item If $H,K\in S$, then $HK\in S$,
	\end{itemize}
where $HK$ is a (normal) subgroups of $G$ generated by $H$ and $K$.
\end{dfn}

	Let $G$ be a finite group, $\D$ a collection of $G$, and $S$ a basic collection of $G$. For any $H\in S$, we put
	\[
	\bs{\D}{H}:=\{F\in\D\ |\ H\leq F,\text{\ if\ } H'\in S \text{ and } H\leq H'\leq F,\ \text{then}\ H=H'  \}.
	\]
Obviously, $\bs{\D}{H}$ is a closed family of subgroups of $G$. Therefore for each $H\in S$ $\B{G,\bs{\D}{H}}$ is a subring of $\B{G}$, and the map $\varphi^{\bs{\D}{H}}$ is an injective ring homomorphism. We remark that if $\bs{\D}{G}$ is empty set, then $\B{G,\bs{\D}{H}}=\{0\}$.

For a collection $\D$ of $G$ and a basic collection $S$ of $G$, we put
\[
S^{\D}:=\{H\in S|\bs{\D}{H}\neq\emptyset\}.
\]
\begin{lem}\label{matsudahodai}
	Let $G$ be a finite group, $\D$ a collection of $G$, and $S$ be a basic collection of $G$. 
	\begin{itemize}
		\item[$(1)$] $\B{G,\D}=\bigoplus_{H\in S^\D} \B{G,\bs{\D}{H}}$.
		\item[$(2)$] $\B{G,\bs{\D}{H}}\cdot\B{G,\bs{\D}{F}}\subseteq\B{G,\bs{\D}{H\cap F}}$ for any $H,F\in S^\D$.
		\item[$(3)$] $1_{\B{G}}\in\B{G,\bs{\D}{G}}=\Z\cdot1_{\B{G}}$.
	\end{itemize}
\end{lem}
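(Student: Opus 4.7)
The plan is to read the defining condition of $\bs{\D}{H}$ as picking out, from each $F\in\D$, a canonical label in $S$. Because $S$ is a basic collection it is closed under products of normal subgroups and (as a collection) under intersections; in particular the family $T_F:=\{H\in S\mid H\leq F\}$ is nonempty (it contains $\braket{1}$) and closed under products, so it has a unique maximum element $H_F$. Unwinding the definition, $F\in\bs{\D}{H}$ exactly when $H=H_F$, so the assignment $F\mapsto H_F$ gives a partition $\D=\bigsqcup_{H\in S^\D}\bs{\D}{H}$. Moreover $H_F$ is preserved under $G$-conjugation of $F$ because every element of $S$ is normal in $G$, so the partition descends to conjugacy classes.

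From this, part $(1)$ is a formal consequence: each $\B{G,\bs{\D}{H}}$ is free on $\{[G/F]\mid (F)\in\bs{\D}{H}^c\}$, and these basis sets partition the basis of $\B{G,\D}$. Part $(3)$ is almost immediate, since $S$ being a collection forces $G\in S$, so $G\in\bs{\D}{G}$; conversely any $F\in\bs{\D}{G}$ must contain $G$ and hence equal $G$, giving $\bs{\D}{G}=\{G\}$ and $\B{G,\bs{\D}{G}}=\Z\cdot 1_{\B{G}}$.

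The bulk of the work is part $(2)$. Taking $F_1\in\bs{\D}{H}$ and $F_2\in\bs{\D}{F}$ and expanding $[G/F_1]\cdot[G/F_2]$ by the double coset formula, each summand has the form $[G/F_3]$ with $F_3=F_1\cap gF_2g^{-1}$, so it suffices to prove $H_{F_3}=H\cap F$. For $H\cap F\leq H_{F_3}$, note that $F$ is normal so $F=gFg^{-1}\leq gF_2g^{-1}$, giving $H\cap F\leq F_3$; because $S$ is closed under intersection, $H\cap F\in S$, and maximality then yields $H\cap F\leq H_{F_3}$. For the reverse inclusion, $H_{F_3}\leq F_1$ combined with maximality forces $H_{F_3}\leq H_{F_1}=H$, while $H_{F_3}\leq gF_2g^{-1}$ together with normality of $H_{F_3}$ gives $H_{F_3}=g^{-1}H_{F_3}g\leq F_2$, hence $H_{F_3}\leq H_{F_2}=F$. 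The step I expect to require the most care is precisely this two-sided comparison: it needs \emph{both} the product-closure of $S$ (for existence and maximality of $H_F$) and its intersection-closure (to keep $H\cap F$ inside $S$, so that $\bs{\D}{H\cap F}$ is even defined), together with the normality of members of $S$ in order to shuttle between $F_2$ and $gF_2g^{-1}$.
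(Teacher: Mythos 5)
Your proof is correct and follows essentially the same route as the paper's: both arguments rest on the fact that product-closure of $S$ gives each $F\in\D$ a unique maximal label $H_F\in S$ (yielding the partition behind $(1)$), and on combining product-closure with normality of the members of $S$ to check that $F_1\cap gF_2g^{-1}$ carries the label $H\cap F$ in $(2)$. Your packaging via the map $F\mapsto H_F$ is only a mild reformulation of the paper's direct verification of the defining condition of $\bs{\D}{H}$.
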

\begin{proof}
	For $H_1,H_2\in S^\D$, if $F\in \bs{\D}{H_1}\cap\bs{\D}{H_2}$, then $H_1=H_2$ because $H_1\leq H_1H_2\leq F$. Therefore if $H_1\neq H_2$, then $\bs{\D}{H_1}\cap\bs{\D}{H_2}=\emptyset$. For any $F\in\D$, the set $\{H\in S | H\leq F\}$ is non-empty since $\braket{1}\in S$. Let $H_0$ be a maximal element of the set $\{H\in S | H\leq F\}$. Then $F\in\bs{\D}{H_0}$. Therefore $\D=\bigsqcup_{H\in S^\D}\bs{\D}{H}$, we obtain $(1)$. Obviously, for $F_1\in\bs{\D}{H_1},F_2\in\bs{\D}{H_2}$, $H_1\cap H_2\leq gF_1g^{-1}\cap F_2$. If there exist $H'\in S$ such that $H_1\cap H_2\leq H'\leq gF_1g^{-1}\cap F_2$, then $H'\leq H_1\cap H_2$ because $H_i\leq H'H_i\leq F_i$ for $i=1,2$. Therefore we obtain $(2)$. $(3)$ is trivial.
\end{proof}

\begin{thm}[{\cite[Theorem $4.1$, $4.2$]{ma82}}]\label{mdt1}
	Let $G$ be a finite group, $\D$ be a collection of $G$, and $S$ be a basic collection of $G$. Then
	\[
	|\B{G,\D}^\times|=2\prod_{H\in S\backslash\{G\}}|\overline{ \B{G,\bs{\D}{H}} }|,
	\]
	where 
	\[
	\overline{ \B{G,\bs{\D}{H}} }:=\{x\in\B{G,\bs{\D}{H}}\ |\ x^2+2x=0\}.
	\]
	Moreover,
	\[
	|\B{G,\D}^\times|=2\prod_{H\in S\backslash\{G\}}  \left|(M^{\bs{\D}{H}})^{-1}\left(\Z'_2{}^{|\bs{\D}{H}|}\right)\cap\Z^{|\bs{\D}{H}|}\right| ,
	\]
	where $M^{\bs{\D}{H}}$ is a table of marks relative to $\bs{\D}{H}$ of $G$, and $\Z'_2=\{0,-2\}\subseteq\Z$.
	
	\noindent In particular,
	\[
	\{-1_{\B{G}}\}\cup\bigcup_{H\in S}\bigcup_{x\in \overline{ \B{G,\bs{\D}{H}} }\backslash \{0\}} \{1_{\B{G}}+x\}
	\]
	is a set of generators of $\B{G,\D}$.
\end{thm}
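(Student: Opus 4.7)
My plan is to (i) reduce counting units to counting $\overline{\B{G,\D}}$ via the Burnside homomorphism, (ii) parametrize $\B{G,\D}^\times$ multiplicatively using the basic-collection decomposition of Lemma \ref{matsudahodai}, and (iii) read off the product formula and the generating set.

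First, because $\varphi^\D:\B{G,\D}\hookrightarrow\prod_{(K)\in\D^c}\Z$ is an injective ring map, $u\in\B{G,\D}^\times$ iff $\varphi^\D_K(u)=\pm 1$ for every $K$, iff $u^2=1_{\B{G}}$. Writing $u=1_{\B{G}}+v$ turns this into $v^2+2v=0$, so $v\mapsto 1_{\B{G}}+v$ is a bijection $\overline{\B{G,\D}}\to\B{G,\D}^\times$. The same reasoning, applied inside each subring $\B{G,\bs{\D}{H}}$ with its own Burnside map, gives
\[
\overline{\B{G,\bs{\D}{H}}}=(M^{\bs{\D}{H}})^{-1}\bigl((\Z'_2)^{|\bs{\D}{H}|}\bigr)\cap\Z^{|\bs{\D}{H}|},
\]
reducing the second displayed formula of the theorem to the first. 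Note also that for any $x\in\overline{\B{G,\bs{\D}{H}}}$ one has $(1_{\B{G}}+x)^2=1_{\B{G}}+2x+x^2=1_{\B{G}}$, so each $1_{\B{G}}+x$ is a self-inverse unit of $\B{G,\D}$.

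The core of the proof is to show that the map
\[
\Psi:\{\pm 1_{\B{G}}\}\times\!\prod_{H\in S\setminus\{G\}}\!\overline{\B{G,\bs{\D}{H}}}\longrightarrow\B{G,\D}^\times,\quad(\epsilon,(x_H))\mapsto \epsilon\prod_{H}(1_{\B{G}}+x_H),
\]
is a bijection, by downward induction on the poset $S^\D$. Given $u\in\B{G,\D}^\times$, decompose $u=\sum_{H\in S^\D}u_H$ via Lemma \ref{matsudahodai}(1); since $\B{G,\bs{\D}{G}}=\Z\cdot 1_{\B{G}}$ and $[G/G]$ appears only in the $G$-summand, $u_G=\pm 1_{\B{G}}$. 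Set $\epsilon=u_G$ and replace $u$ by $\epsilon u$, so now $u_G=1_{\B{G}}$. For any maximal element $H_0$ of $S^\D\setminus\{G\}$, project the identity $u^2=1_{\B{G}}$ onto the $H_0$-summand: the key observation is that a cross-product $u_{H_1}u_{H_2}$ contributes to $\B{G,\bs{\D}{H_0}}$ only through basis elements $[G/F]$ with $H_0\leq F$, which in the Mackey formula $F=F_1\cap gF_2g^{-1}$ forces (via normality of $H_0$ and the maximality of $H_i$ in $S\cap F_i$) that $H_0\leq H_1$ and $H_0\leq H_2$; the maximality of $H_0$ in $S^\D\setminus\{G\}$ then restricts $\{H_1,H_2\}\subseteq\{G,H_0\}$, and combining the surviving terms yields $u_{H_0}^2+2u_{H_0}=0$, i.e.\ $u_{H_0}\in\overline{\B{G,\bs{\D}{H_0}}}$. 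Setting $x_{H_0}=u_{H_0}$ and multiplying $u$ by $1_{\B{G}}+x_{H_0}$ clears the $H_0$-summand; by Lemma \ref{matsudahodai}(2), this only perturbs summands indexed by subgroups strictly below $H_0$. Iterating through the maximal elements of successive levels of $S^\D$ produces a factorization $\epsilon u=\prod_H(1_{\B{G}}+x_H)$, giving surjectivity of $\Psi$. Injectivity is dual: any parameter mismatch first appears at a top-most level and cannot be cancelled by factors from strictly lower levels.

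Bijectivity of $\Psi$ immediately yields the product formula $|\B{G,\D}^\times|=2\prod_{H\in S\setminus\{G\}}|\overline{\B{G,\bs{\D}{H}}}|$, with trivial factors for $H\in S\setminus S^\D$ (where $\bs{\D}{H}=\emptyset$) contributing $1$. The generating set statement reads off $\Psi$ directly: every unit factors as $\pm\prod_H(1_{\B{G}}+x_H)$, so $\{-1_{\B{G}}\}\cup\bigcup_{H\in S}\{1_{\B{G}}+x:x\in\overline{\B{G,\bs{\D}{H}}}\setminus\{0\}\}$ multiplicatively generates $\B{G,\D}^\times$, with the $H=G$ piece contributing exactly $-1_{\B{G}}$ (since $\overline{\B{G,\bs{\D}{G}}}=\{0,-2\cdot 1_{\B{G}}\}$). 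The main technical obstacle is the maximality step: cleanly pinning down which pairs $(H_1,H_2)$ contribute to the $H_0$-summand of $u^2$ requires the subgroup containments forced by $F=F_1\cap gF_2g^{-1}$ together with the characterization of $\bs{\D}{H}$ as subgroups of $\D$ whose maximum $S$-subgroup is $H$.
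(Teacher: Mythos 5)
Your argument is correct, and it rests on the same skeleton as the paper's: both use the direct-sum decomposition and multiplicativity of Lemma \ref{matsudahodai}, and both dispose of the indices $H\in S\setminus S^{\D}$ by noting that $\B{G,\bs{\D}{H}}=\{0\}$ contributes a trivial factor. The genuine difference is that the paper's proof is essentially a pointer: it delegates the entire multiplicative parametrization of the unit group to \cite[Lemma 3.5]{ma82}, whereas you reconstruct that lemma from scratch --- the reduction of $u\in\B{G,\D}^\times$ to $u^2=1_{\B{G}}$ via injectivity of $\varphi^{\D}$, the translation of $x^2+2x=0$ into the mark condition $\varphi_K(x)\in\Z'_2$ (which is exactly what converts the first displayed formula into the second), and above all the descending induction on $S^{\D}$ showing that $(\epsilon,(x_H))\mapsto\epsilon\prod_H(1_{\B{G}}+x_H)$ is bijective. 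That induction is sound: since each product $u_{H_1}u_{H_2}$ lies in the single summand $\B{G,\bs{\D}{H_1\cap H_2}}$ by Lemma \ref{matsudahodai}(2), the $H_0$-component of $u^2=1_{\B{G}}$ for $H_0$ maximal among the not-yet-cleared indices is exactly $u_{H_0}^2+2u_{H_0}$, and multiplying by the self-inverse unit $1_{\B{G}}+u_{H_0}$ disturbs only summands indexed by strictly smaller elements of $S$. So your proposal buys a self-contained proof where the paper offers only a citation. One small expository gap: you assert $u_G=\pm 1_{\B{G}}$ from the shape of the decomposition alone; the clean justification is either that the $G$-component of $u^2=1_{\B{G}}$ equals $u_G^2$, or simply that $\varphi^{\D}_G(u)=\pm 1$ picks out the coefficient of $[G/G]$.
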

\begin{proof}
	By Lemma $\ref{matsudahodai}$ and \cite[Lemma $3.5$]{ma82}, we have this theorem in the case of $S=S^\D$. For any $H\in S\backslash S^\D$, $|\overline{ \B{G,\bs{\D}{H}} }|=1$ since $\B{G,\bs{\D}{H}}=\{0\}$, completing the proof.
\end{proof}
\section{Examples of Matsuda's theorem}

\begin{prop}\label{seiki}
	Let $G$ be a finite group, and let $\ns{G}$ is the set of all normal subgroups of $G$. Then
	\[
	|\B{G,\ns{G}}^\times|=2^{\#\{H\leq G | |G:H|\leq2\}}.
	\]
	Moreover,
	\[
	\{-1_{\B{G}}\}\cup\{u_H \ |\ H\leq G,\ |G:H|=2\}
	\]
	is a set of generators of $\B{G,\ns{G}}^\times$ where $u_H=1_{\B{G}}-[G/H]$. 
\end{prop}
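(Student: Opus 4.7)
My plan is to apply Theorem \ref{mdt1} with the simultaneous choice $\D=S=\ns{G}$. The first task is to verify the hypotheses: $\ns{G}$ is a collection because it contains $G$ and is closed under conjugation (trivially) and intersection, and it is a basic collection because $\langle 1\rangle\in\ns{G}$, every element is by definition normal, and products of normal subgroups are normal.

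The decisive computational step is the identification of $\bs{\D}{H}$ for each $H\in\ns{G}$. Since $S$ equals $\D$, any $F\in\bs{\D}{H}$ itself lies in $S$, so the maximality clause applied to $H'=F$ collapses to $F=H$; hence $\bs{\D}{H}=\{H\}$ and $\B{G,\bs{\D}{H}}=\Z\cdot[G/H]$, which is closed under multiplication because $[G/H]^2=|G:H|\,[G/H]$ for $H\trianglelefteq G$.

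I would then determine $\overline{\B{G,\bs{\D}{H}}}$ by writing $x=n[G/H]$ and solving $n(n|G:H|+2)=0$ in $\Z$. The nonzero solution $n=-2/|G:H|$ is an integer precisely when $|G:H|\in\{1,2\}$, giving $n=-2$ and $n=-1$ respectively; so $\bigl|\overline{\B{G,\bs{\D}{H}}}\bigr|=2$ exactly for those indices and $=1$ otherwise. Substituting into the product formula of Theorem \ref{mdt1} (indexed by $H\in S\setminus\{G\}$) and using that every index-$2$ subgroup is automatically normal, I obtain
\[
|\B{G,\ns{G}}^\times|=2\cdot 2^{\#\{H\le G:|G:H|=2\}}=2^{\#\{H\le G:|G:H|\le 2\}}.
\]

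For the generating set, the explicit list in Theorem \ref{mdt1} contributes $1_{\B{G}}+(-2\cdot 1_{\B{G}})=-1_{\B{G}}$ from the term $H=G$ (already present in the list) and $1_{\B{G}}-[G/H]=u_H$ from each index-$2$ subgroup $H$, matching the asserted description exactly. I do not anticipate a real obstacle: the proposition is a direct specialization of Matsuda's theorem, with the only substantive observation being the case analysis on whether $|G:H|\in\{1,2\}$ or $|G:H|\ge 3$.
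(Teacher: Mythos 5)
Your proposal is correct and follows essentially the same route as the paper: specialize Theorem \ref{mdt1} to $S=\D=\ns{G}$, observe $\bs{\ns{G}}{H}=\{H\}$ so that $\B{G,\bs{\ns{G}}{H}}=\Z\cdot[G/H]$, and count the solutions of $x^2+2x=0$ there. The only difference is that you spell out the computation $n(n|G:H|+2)=0$ and the hypothesis checks explicitly, which the paper leaves implicit.
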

\begin{proof}
	Now, $S:=\ns{G}$ is a basic collection. For $H\in\ns{G}\backslash{G}$, $\B{G,S_{\ns{G}}(H)}=\Z\cdot[G/H]$ by $S_{\ns{G}}(H)=\{H\}$. Therefore
	\[
	|\overline{ \B{G,\bs{\ns{G}}{H}} }|=
	\begin{cases}
	1 & |G:H|\neq2,\\
	2 & |G:H|=2,
	\end{cases}
	\]
since $H$ is normal subgroup of $G$ .This completes the proof.
\end{proof}

\begin{thm}\label{oddeq}
	Let $G$ be a finite group. Then the following are equivalent.
	\begin{itemize}
		\item[$(1)$] $G$ is non-abelian simple group, then $|G|=$even.
		\item[$(2)$] $|G|=$odd, then $|\B{G}^\times|=2$.
		\item[$(3)$] $|G|=$odd, then $\B{G}^\times\simeq \B{G,\ns{G}}^\times$.
		\item[$(4)$] $|G|=$odd, then $\B{G}^\times\simeq \B{G, \sub{G}_{G'}}^\times$.
	\end{itemize}
	Here, $\ns{G}$ is the set of all normal subgroups of $G$, $G'$ is a commutator subgroup of $G$, and $\sub{G}_{G'}=\{H\leq G\ |\ G'\leq H\}$.
\end{thm}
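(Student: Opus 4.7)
The plan is to cluster (2) $\Leftrightarrow$ (3) $\Leftrightarrow$ (4) together using the structural results already available, and then to identify (1) $\Leftrightarrow$ (2) as a Burnside-theoretic reformulation of the Feit--Thompson theorem.

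For (2) $\Leftrightarrow$ (3), the inclusion $\B{G,\ns{G}} \hookrightarrow \B{G}$ is a unital subring embedding, so its restriction to unit groups is injective; Proposition \ref{seiki} gives $|\B{G,\ns{G}}^\times| = 2$ for odd-order $G$, since no index-$2$ subgroup exists. Hence (3) holds iff both unit groups have order $2$, iff (2). For (2) $\Leftrightarrow$ (4), I would apply Theorem \ref{teiri1} to the abelianization $f \colon G \twoheadrightarrow G/G'$ with $\D = \sub{G}$: one checks $\D_{\ker f} = \sub{G}_{G'}$ and $f(\D_{\ker f}) = \sub{G/G'}$, whence $\B{G,\sub{G}_{G'}} \simeq \B{G/G'}$. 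Since $G/G'$ is abelian of odd order, Proposition \ref{seiki} again forces the unit group order to be $2$, and the same inclusion-and-count argument closes the equivalence.

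For the core equivalence (1) $\Leftrightarrow$ (2), note that (1) is the Feit--Thompson theorem, equivalent to the assertion that every odd-order group is solvable. For (1) $\Rightarrow$ (2), given (1) and an odd-order $G$ (automatically solvable), I would apply Matsuda's theorem \ref{mdt1} with basic collection $S = \ns{G}$ and observe that each factor $|\overline{\B{G,\bs{\D}{H}}}|$ equals $1$: the $\{0,-2\}$-valued mark solutions detect index-$2$ subsections, and none exist in odd-order groups. Hence $|\B{G}^\times|=2$.

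The converse (2) $\Rightarrow$ (1) goes by contrapositive: if some non-abelian simple group $G$ of odd order existed, one would have to exhibit a unit in $\B{G}^\times$ outside $\{\pm 1_{\B{G}}\}$ to contradict (2). The main obstacle is precisely this step, since the argument cannot appeal circularly to Feit--Thompson. The natural route is a lattice-theoretic analysis of $\overline{\B{G,\bs{\sub{G}}{\braket{1}}}}$, showing that the $\{0,-2\}$-valued system over the marks of the proper subgroup lattice admits a non-trivial solution whenever $G$ is non-abelian simple; Theorem \ref{mdt1} then converts such a solution into the required unit and completes the contradiction.
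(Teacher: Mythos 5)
Your treatment of (2) $\Leftrightarrow$ (3) and (2) $\Leftrightarrow$ (4) is sound and essentially the paper's: the paper invokes Proposition \ref{seiki} for (3) and Lemma \ref{zyouyo} (rather than Theorem \ref{teiri1}, which specializes to it) for (4), and in both cases reduces to the observation that the relevant unit group has order $2$ because an odd-order group has no subgroup of index $2$. The problem is the core equivalence (1) $\Leftrightarrow$ (2), where the paper simply cites the literature and you attempt an actual proof; both of your directions have genuine gaps. For (1) $\Rightarrow$ (2) you assert that each Matsuda factor $|\overline{\B{G,\bs{\sub{G}}{H}}}|$ equals $1$ because ``the $\{0,-2\}$-valued mark solutions detect index-$2$ subsections, and none exist in odd-order groups.'' That is not a proof, and the heuristic is misleading: Example \ref{hanrei} shows that for $A_4$ the block $\overline{\B{G,\bs{\sub{G}}{\braket{1}}}}$ contains a non-zero solution supported on subgroups of index $6$ and $4$, so non-trivial solutions of $x^2+2x=0$ are not simply ``detected by index-$2$ subgroups.'' Proving that all these factors vanish for odd-order solvable groups is exactly the non-trivial content of the known theorem (essentially Matsuda/Yoshida) and requires an induction on the solvable structure or a congruence argument on marks that you have not supplied.

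For (2) $\Rightarrow$ (1) you candidly leave the key step open: exhibiting a unit other than $\pm 1_{\B{G}}$ in the Burnside ring of a hypothetical non-abelian simple group of odd order. This cannot be dispatched by gesturing at a ``lattice-theoretic analysis''; the standard resolution is Dress's theorem that $G$ is solvable if and only if $\B{G}$ has no idempotents other than $0$ and $1_{\B{G}}$, whence a non-solvable $G$ admits an idempotent $e\neq 0,1_{\B{G}}$ and $1_{\B{G}}-2e$ is a unit distinct from $\pm 1_{\B{G}}$ (equivalently, $-2e$ is a non-zero element of some $\overline{\B{G,\bs{\sub{G}}{H}}}$ in the sense of Theorem \ref{mdt1}). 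Without either citing this or proving it, the equivalence (1) $\Leftrightarrow$ (2) is not established: your proposal correctly isolates where the difficulty lies but does not close it, whereas the paper sidesteps the issue by citation.
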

\begin{proof}
	(1) $\Leftrightarrow$ (2) by \cite{so76}. By Proposition $\ref{seiki}$, $|\B{G,\ns{G}}|=2$ since $|G|$ is odd. Hence (2) $\Leftrightarrow$ (3). By Lemma $\ref{zyouyo}$, $\B{G, G_{G'}}\simeq\B{G/G'}$. Since $G/G'$ is an abelian group, $|\B{G/G'}|=2$ by Proposition $\ref{seiki}$. Therefore (2) $\Leftrightarrow$ (4). This completes the proof.
\end{proof}

\begin{dfn}
	Let $W$ be a finite Coxeter group with Coxeter system $(W,S)$. Then a subgroup $P$ is called parabolic subgroup if there exists $J\subseteq S$ such that $(P)=(\braket{J})$. Denoted by $\P_W$ the set of all parabolic subgroups of $W$.
\end{dfn}
Let $W$ be a finite Coxeter group. Then the set $\P_W$ of all parabolic subgroups of $W$ becomes a collection (see \cite{so76}).

\begin{ex}
	Let $W$ be a Coxeter group of type $I_2(m)$. Then
	\[
	|\B{W,\P_W}^\times|=4.
	\]
\end{ex}
\begin{proof}
	We put $G:=D_{2m}=\braket{\sigma,\tau|\sigma^m=\tau^2=(\sigma\tau)^2=1}$, and $L=\{\tau, \sigma\tau\}$. Then $G$ is a Coxeter group of type $I_2(m)$ with Coxeter system $(G,L)$. 
	Let $S=\{ \braket{1}, G \}$. Then $S$ is a basic collection of $G$. Since
	\[
	\P_G{}^c =
	\begin{cases}
	\{(\braket{1}), (\braket{\tau}), (\braket{\sigma\tau}), (G) \} & m \text{ is even},\\
	\{(\braket{1}), (\braket{\tau}), (G) \} & m \text{ is odd},
	\end{cases}
	\]
	it follows that
	\[
	\bs{\P_G}{\braket{1}}^c=
	\begin{cases}
	\{(\braket{1}), (\braket{\tau}), (\braket{\sigma\tau}) \} & m \text{ is even},\\
	\{(\braket{1}), (\braket{\tau}) \} & m \text{ is odd}.
	\end{cases}
	\]
	The table of marks relative to $\bs{\P_G}{\braket{1}}$ of $G$ is as follows
	\[
	M:=M^{\bs{\P_G}{\braket{1}}}= 
	\begin{cases}
	
		\left(
		\begin{array}{ccc}
		2m & m & m \\
		0  & 2 & 0 \\
		0  & 0 & 2
		\end{array}
		\right) & m \text{ is even},\\[8mm]
	
	\left(
	\begin{array}{cc}
	2m & m  \\
	0  & 1  \\
	\end{array}
	\right) & m \text{ is odd}.\\
	\end{cases}
	\]
	Therefore
	\[
	M^{-1}\left(\Z'_2{}^{|\bs{\P_G}{\braket{1}}|}\right)\cap\Z^{|\bs{\P_G}{\braket{1}}|}=
	\begin{cases}
	\left\{
	\left(
	\begin{array}{c}
	0  \\
	0  \\
	0  
	\end{array}
	\right),
	\left(
	\begin{array}{c}
	1  \\
	-1  \\
	-1  
	\end{array}
	\right)
	\right\}
	 & m \text{ is even},\\[8mm]
	
		\left\{
	\left(
	\begin{array}{c}
	0  \\
	0  \\
	\end{array}
	\right),
	\left(
	\begin{array}{c}
	1  \\
	-2   
	\end{array}
	\right)
	\right\} & m \text{ is odd}.\\
	\end{cases}
	\]
This completes the proof.
\end{proof}

\begin{ex}\label{hanrei}
	Let $G=A_4$ be a alternating group of degree $4$. Then
	\[
	|\B{G}^\times|=4.
	\]
\end{ex}
\begin{proof}
	Let $S=\{\braket{1},\ V_4,\ G\}$ where $V_4$ is a Klein four-group.
	\[
	\bs{\sub{G}}{\braket{1}}=\{(\braket{1}),\ (\braket{H_1}),\ \braket{H_2}\},
	\]
	where $H_1=\{1, (1\ 2)(3\ 4)\}$, and $H_2=\{1, (1\ 2\ 3), (1\ 3\ 2)\}$. 
	
	The table of marks relative to $\bs{\sub{G}}{\braket{1}}$ of $G$ is as follows
	\[
	M:=M^{\bs{\sub{G}}{\braket{1}}}= 
		\left(
		\begin{array}{ccc}
			12 & 6 & 4 \\
			0  & 2 & 0 \\
			0  & 0 & 1
		\end{array}
		\right) .
	\]
	Therefore
	\[
	M^{-1}\left(\Z'_2{}^{|\bs{\sub{G}}{\braket{1}}|}\right)\cap\Z^{|\bs{\sub{G}}{\braket{1}}|}=
	\left\{
	\left(
	\begin{array}{c}
	0  \\
	0  \\
	0  
	\end{array}
	\right),
	\left(
	\begin{array}{c}
	1  \\
	-1  \\
	-2  
	\end{array}
	\right)
	\right\}.
	\]
\end{proof}

\section{Further results}
For finite group $G$, collection $\D$ of $G$, and subgroup $N$ of $G$, we put 
\[
\D_N=\{H\in\D\ |\ N\leq H\}.
\]
It is easy to see that if $N$ is a normal subgroup of $G$, then $\D_N$ is collection of $G$.
\begin{lem}\label{zyouyo}
	Let $G$ be a finite group, and let $N$ be a normal subgroup of $G$. Then
	\[
	\B{G/N}\simeq\B{G,\sub{G}_{N}},
	\]
	where $\sub{G}_N:=\{H\leq G\ |\ N\leq H\}$.
\end{lem}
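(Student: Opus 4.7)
The plan is to exhibit the isomorphism as the \emph{inflation} of $G/N$-sets to $G$-sets along the canonical quotient $\pi\colon G\twoheadrightarrow G/N$. Any $G/N$-set pulls back to a $G$-set via $\pi$, and the stabilizer of every point in the pullback automatically contains $N=\ker\pi$, so inflation lands in $\B{G,\sub{G}_{N}}$. I would upgrade this observation into an explicit ring isomorphism on the canonical $\mathbb{Z}$-bases.

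Concretely, define $\phi\colon\B{G/N}\to\B{G,\sub{G}_{N}}$ on transitive basis elements by
\[
\phi\bigl([(G/N)/(H/N)]\bigr)=[G/H]
\]
for each $(H)\in\sub{G}_{N}^{c}$, and extend $\mathbb{Z}$-linearly. The subgroup correspondence theorem yields a bijection $\sub{G/N}\to\sub{G}_{N}$, $H/N\mapsto H$; since $N$ is normal, this bijection commutes with conjugation and so descends to a bijection of conjugacy classes $\sub{G/N}^{c}\leftrightarrow\sub{G}_{N}^{c}$. Hence $\phi$ is a bijection between the chosen $\mathbb{Z}$-bases and is in particular a $\mathbb{Z}$-module isomorphism.

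It then remains to verify that $\phi$ is multiplicative and unital. For $H,K\in\sub{G}_{N}$, the quotient $\pi$ induces a bijection
\[
H\backslash G/K\longrightarrow (H/N)\backslash(G/N)/(K/N),\qquad HgK\longmapsto(H/N)\bar g(K/N),
\]
and under this bijection one has the identification $(H/N)\cap\bar g(K/N)\bar g^{-1}=(H\cap gKg^{-1})/N$. Substituting these into the product formula for $[(G/N)/(H/N)]\cdot[(G/N)/(K/N)]$ in $\B{G/N}$ and applying $\phi$ term by term reproduces exactly the product formula for $[G/H]\cdot[G/K]$ in $\B{G}$; preservation of the unit is the special case $H=G$, giving $\phi([(G/N)/(G/N)])=[G/G]=1_{\B{G}}$.

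The content of the proof is essentially formal: this is the standard compatibility of the Burnside functor with quotient maps, and I do not anticipate a real obstacle beyond the bookkeeping of the double-coset decomposition. The only place where the hypothesis that $N$ is normal enters is in making the subgroup correspondence conjugation-equivariant, which is precisely what matches conjugacy classes of $G/N$-subgroups with $G$-conjugacy classes of subgroups in $\sub{G}_{N}$.
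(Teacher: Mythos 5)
Your proposal is correct and follows essentially the same route as the paper: both use the correspondence theorem to match the transitive bases (giving the $\mathbb{Z}$-module isomorphism) and then check multiplicativity by exhibiting the bijection of double cosets $H\backslash G/K\to (H/N)\backslash(G/N)/(K/N)$ together with the compatibility of intersections of subgroups containing $N$. No substantive difference from the paper's argument.
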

\begin{proof}
	By correspondence theorem, the map $\pi: G_N\longrightarrow \sub{G/N}$ defined by $H\longmapsto H/N$ is a bijection. Hence
	\[
	\B{G/N}\simeq\B{G,G_{N}}
	\]
	as $\Z$-module. Therefore it suffices to show that
	\[
	[\pi(G)/\pi(H)]\cdot[\pi(G)/\pi(K)]=\sum_{HxK\in H\backslash G/K}[\pi(G)/\pi(gHg^{-1}\cap K)].
	\]
	For $H,K\in G_N$, we put the map $g:H\backslash G/K\longrightarrow \pi(H)\backslash \pi(G)/\pi(K); HxK\longmapsto \pi(H)xN\pi(K)$. Since
	\begin{eqnarray*}
		HxK=HyK&\Longleftrightarrow&yN=hxkN\quad ({}^\exists h\in H, {}^\exists k\in K)\\
		&\Longleftrightarrow& \pi(H)yN\pi(K)=\pi(H)xN\pi(K)\\
	\end{eqnarray*}
	the map $g$ is injective. The map $g$ is clearly surjective hence $g$ is bijection. Since $\pi(gHg^{-1})=gN\pi(H)g^{-1}N$, 
	\begin{align*}
	\sum_{HxK\in H\backslash G/K}[\pi(G)//\pi(gHg^{-1}\cap K)]=&\sum_{HxK\in H\backslash G/K}[\pi(G)//\pi(gHg^{-1})\cap \pi(K)]\\
	=&\sum_{HxK\in H\backslash G/K}\hspace{-0.5cm}[\pi(G)//gN\pi(H)g^{-1}N\cap \pi(K)]\\
	=&\hspace{-1.2cm}\sum_{\pi(H)xN\pi(K)\in \pi(H)\backslash \pi(G)/\pi(K)}\hspace{-1.6cm} [\pi(G)//gN\pi(H)g^{-1}N\cap \pi(K)]\\
	=&[\pi(G)/\pi(H)]\cdot[\pi(G)/\pi(K)].
	\end{align*}
	This completes the proof.
\end{proof}
\begin{lem}
	Let $G_1$ and $G_2$ be finite groups, and $\D$ a collection of $G_1$. If $f:G_1\longrightarrow G_2$ is a surjective group homomorphism, then the family
	\[
	f(\D_{\ker f})=\{f(H)\leq G_2\ |\ H\in\D,\ \ker f\leq H\}
	\]
	is a collection of $G_2$
\end{lem}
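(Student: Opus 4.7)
The plan is to verify the three defining properties of a collection for $f(\D_{\ker f})$: that it contains the ambient group $G_2$, that it is closed under $G_2$-conjugation, and that it is closed under intersection. The surjectivity of $f$ together with the requirement $\ker f \leq H$ imposed in the definition of $\D_{\ker f}$ will do all the work.

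First I would observe that $G_1 \in \D$ (since $\D$ is a collection of $G_1$) and $\ker f \leq G_1$, so $G_1 \in \D_{\ker f}$ and hence $G_2 = f(G_1) \in f(\D_{\ker f})$. For closure under conjugation, given $f(H) \in f(\D_{\ker f})$ with $H \in \D$ and $\ker f \leq H$, and given $g_2 \in G_2$, surjectivity of $f$ produces $g_1 \in G_1$ with $f(g_1) = g_2$; then $g_2 f(H) g_2^{-1} = f(g_1 H g_1^{-1})$. Since $\D$ is closed under $G_1$-conjugation, $g_1 H g_1^{-1} \in \D$, and since $\ker f$ is normal in $G_1$ we still have $\ker f = g_1 \ker f g_1^{-1} \leq g_1 H g_1^{-1}$, so $g_1 H g_1^{-1} \in \D_{\ker f}$, giving $g_2 f(H) g_2^{-1} \in f(\D_{\ker f})$.

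The only step that requires a small argument is closure under intersection. Given $f(H), f(K) \in f(\D_{\ker f})$ with $H, K \in \D$ and $\ker f \leq H \cap K$, I would prove the identity
\[
f(H) \cap f(K) = f(H \cap K).
\]
The inclusion $\supseteq$ is automatic. For $\subseteq$, if $y \in f(H) \cap f(K)$, write $y = f(h) = f(k)$ with $h \in H$ and $k \in K$; then $hk^{-1} \in \ker f \leq K$, so $h \in K$ and therefore $h \in H \cap K$, yielding $y = f(h) \in f(H \cap K)$. Because $\D$ is closed under intersection we have $H \cap K \in \D$, and plainly $\ker f \leq H \cap K$, so $H \cap K \in \D_{\ker f}$ and $f(H) \cap f(K) = f(H \cap K) \in f(\D_{\ker f})$.

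The main obstacle, if one can call it that, is the identity $f(H)\cap f(K) = f(H\cap K)$: this is exactly the point where the restriction $\ker f \leq H$ in the definition of $\D_{\ker f}$ is essential, since without it the inclusion $\subseteq$ can fail. Everything else is a direct unwinding of the definition of ``collection'' combined with surjectivity of $f$ and normality of $\ker f$.
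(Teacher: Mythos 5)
Your proposal is correct and follows essentially the same route as the paper: verify the three collection axioms directly, with the only substantive step being the identity $f(H)\cap f(K)=f(H\cap K)$, proved exactly as the paper does by using $\ker f\leq H\cap K$ to transfer an element of one preimage into the other. The paper is merely terser on the containment-of-$G_2$ and conjugation-closure steps, which you spell out in more detail.
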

\begin{proof}
	Since the map $f$ is a surjective group homomorphism, $G_2\in f(\D)$ and $g_2H_2g_2^{-1}\in f(\D_{\ker f})$ for any $H_2\in f(\D_{\ker f})$, and $g_2\in G_2$. Therefore it suffices to show that $f(H\cap K)=f(H)\cap f(K)$ for any $H,K\in \D$. For any $y\in f(H)\cap f(K)$, there exist $h\in H$ and $k\in K$ such that $y=f(h)=f(k)$. Hence $k\in h\ker f \subset H\ker f = H$ since $\ker f \leq H$. Therefore $y\in f(H\cap K)$, completing the proof.
\end{proof}

\begin{thm}\label{teiri1}
	Let $G_1$ and $G_2$ be finite groups, and $\D$ a collection of $G_1$. If $f:G_1\longrightarrow G_2$ is a surjective group homomorphism, then
	\[
	\B{G_1,\D_{\ker f}} \simeq \B{G_2,f(\D_{\ker f})} ,
	\]
	where $f(\D_{\ker f})=\{f(H)\leq G_2\ |\ H\in\D,\ \ker f\leq H\}$ and $\D_{\ker f}=\{H\in\D\ |\ \ker f\leq H\}$. 
\end{thm}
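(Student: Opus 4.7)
The plan is to deduce this theorem directly from Lemma \ref{zyouyo} together with the first isomorphism theorem. The induced isomorphism $\bar f : G_1/\ker f \xrightarrow{\sim} G_2$ identifies $G_2$ with $G_1/\ker f$, and under this identification the collection $f(\D_{\ker f})$ pulls back to $\{H/\ker f \mid H \in \D_{\ker f}\}$. Applying Lemma \ref{zyouyo} with $N = \ker f$ already produces a ring isomorphism $\B{G_1/\ker f} \simeq \B{G_1, \sub{G_1}_{\ker f}}$ sending $[(G_1/\ker f)/(H/\ker f)]$ to $[G_1/H]$, so the task reduces to showing that this isomorphism restricts cleanly to a ring isomorphism of the relevant subrings.

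Concretely, I would define $\Phi : \B{G_2, f(\D_{\ker f})} \to \B{G_1, \D_{\ker f}}$ on the natural $\Z$-bases by $[G_2/f(H)] \mapsto [G_1/H]$ for $(H) \in (\D_{\ker f})^c$. The correspondence theorem (already invoked in the proof of Lemma \ref{zyouyo}) gives a bijection $\sub{G_1}_{\ker f} \to \sub{G_2}$ via $H \mapsto f(H)$, and since conjugation in $G_1$ corresponds to conjugation in $G_2$ under $\bar f$, this bijection descends to $(\D_{\ker f})^c \to f(\D_{\ker f})^c$. Thus $\Phi$ is a well-defined $\Z$-module isomorphism.

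Multiplicativity reduces to matching the two double-coset formulas
\[
[G_2/f(H)] \cdot [G_2/f(K)] = \sum_{f(H)\,y\,f(K) \in f(H)\backslash G_2/f(K)} [G_2/(f(H) \cap y f(K) y^{-1})]
\]
and
\[
[G_1/H] \cdot [G_1/K] = \sum_{HxK \in H\backslash G_1/K} [G_1/(H \cap xKx^{-1})]
\]
for $H,K \in \D_{\ker f}$. Normality of $\ker f$ gives $xKx^{-1} \in \D_{\ker f}$, closedness of $\D$ under intersection gives $H \cap xKx^{-1} \in \D_{\ker f}$, and the lemma preceding the theorem yields $f(H \cap xKx^{-1}) = f(H) \cap f(x) f(K) f(x)^{-1}$. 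The explicit double-coset bijection $H\backslash G_1/K \to f(H)\backslash G_2/f(K)$, $HxK \mapsto f(H) f(x) f(K)$, constructed in the proof of Lemma \ref{zyouyo}, then identifies the two sums term by term.

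I expect no deep obstacle; the only care needed is bookkeeping, namely tracking conjugacy class representatives on each side and ensuring the double-coset bijection sends conjugates to conjugates. With the two preceding lemmas in hand the theorem is essentially a restriction-of-scalars argument, requiring no new ring-theoretic input beyond what has already been developed.
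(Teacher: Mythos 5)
Your proposal is correct and follows exactly the route the paper intends: the paper itself states Theorem \ref{teiri1} without an explicit proof, leaving it to follow from Lemma \ref{zyouyo} (the isomorphism $\B{G/N}\simeq\B{G,\sub{G}_N}$ via the correspondence theorem and the double-coset bijection) together with the preceding lemma establishing $f(H\cap K)=f(H)\cap f(K)$ when $\ker f\leq H$. Your assembly of these ingredients — identifying $G_2$ with $G_1/\ker f$, restricting the basis bijection to $(\D_{\ker f})^c$, and matching the double-coset expansions term by term — is precisely the omitted argument, and the details you supply are sound.
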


By Theorem $\ref{teiri1}$, we have the following Matsuda's theorem.
\begin{thm}[{\cite[Theorem $4.4$]{ma82}}]
	Let $G_1$ and $G_2$ be finite groups, and $\D$ a collection of $G_1$. If $f:G_1\longrightarrow G_2$ is a surjective group homomorphism, then
	\[
	|\B{G_1,\D}^\times| = |\B{G_2, f(\D_{\ker f})}^\times|\cdot |\overline{ \B{G_1,\bs{\D}{H}} }|,
	\]
	where $S=\{\braket{1},\ \ker f,\ G\}$, and $f(\D_{\ker f})=\{f(H)\ |\ H\in\D,\ \ker f\leq H\}$.
\end{thm}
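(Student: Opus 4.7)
The plan is to combine the structural decomposition of Theorem \ref{mdt1} with the transfer isomorphism of Theorem \ref{teiri1}. First I would apply Theorem \ref{mdt1} to $\B{G_1,\D}$ using the basic collection $S=\{\braket{1},\,\ker f,\,G_1\}$. Checking the axioms is immediate: $\braket{1}\in S$, every element is normal in $G_1$, and the family is closed under products since $\braket{1}\cdot\ker f=\ker f$ and $\ker f\cdot G_1=G_1$. Because $G_1$ is excluded from the product in Theorem \ref{mdt1}, this yields
\[
|\B{G_1,\D}^\times|=2\,\bigl|\overline{\B{G_1,\bs{\D}{\braket{1}}}}\bigr|\cdot\bigl|\overline{\B{G_1,\bs{\D}{\ker f}}}\bigr|.
\]

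Next I would unwind the stratum $\bs{\D}{\ker f}$: an $F\in\D$ lies in it iff $\ker f\leq F$ and no element of $S$ sits strictly between $\ker f$ and $F$, which, given the shape of $S$, is equivalent to $F\neq G_1$. Thus $\bs{\D}{\ker f}=\D_{\ker f}\setminus\{G_1\}$. On the $G_2$-side, I apply Theorem \ref{mdt1} to $\B{G_2,f(\D_{\ker f})}$ with the minimal basic collection $\{\braket{1},\,G_2\}$, obtaining
\[
|\B{G_2,f(\D_{\ker f})}^\times|=2\,\bigl|\overline{\B{G_2,\bs{f(\D_{\ker f})}{\braket{1}}}}\bigr|,
\]
and the analogous analysis identifies $\bs{f(\D_{\ker f})}{\braket{1}}=f(\D_{\ker f})\setminus\{G_2\}$.

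The crux is to match the two middle factors. The isomorphism supplied by Theorem \ref{teiri1} sends $[G_1/H]$ to $[G_2/f(H)]$ for $H\in\D_{\ker f}$, and in particular carries $[G_1/G_1]$ to $[G_2/G_2]$. Removing these top classes on each side is therefore compatible, yielding an induced isomorphism
\[
\B{G_1,\bs{\D}{\ker f}}\simeq\B{G_2,\bs{f(\D_{\ker f})}{\braket{1}}}
\]
of (possibly non-unital) subrings, hence
$\bigl|\overline{\B{G_1,\bs{\D}{\ker f}}}\bigr|=\bigl|\overline{\B{G_2,\bs{f(\D_{\ker f})}{\braket{1}}}}\bigr|$.

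Substituting this into the first display, the factor $2\,\bigl|\overline{\B{G_1,\bs{\D}{\ker f}}}\bigr|$ becomes exactly $|\B{G_2,f(\D_{\ker f})}^\times|$, giving
\[
|\B{G_1,\D}^\times|=|\B{G_2,f(\D_{\ker f})}^\times|\cdot\bigl|\overline{\B{G_1,\bs{\D}{\braket{1}}}}\bigr|,
\]
as asserted (reading $H=\braket{1}$ in the statement). I expect the main obstacle to be verifying cleanly that Theorem \ref{teiri1} restricts to an isomorphism between the non-unital subrings, which ultimately reduces to the correspondence-theorem bijection $H\mapsto f(H)$ on $G_1$-conjugacy classes of subgroups containing $\ker f$, together with tracking that this bijection commutes with intersections (already used in the preceding lemma).
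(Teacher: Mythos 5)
Your proposal is correct and follows essentially the same route as the paper: both results are obtained by combining Theorem \ref{mdt1} for the basic collection $S=\{\braket{1},\ \ker f,\ G_1\}$ with the isomorphism of Theorem \ref{teiri1}. The only cosmetic difference is that the paper applies Theorem \ref{mdt1} a second time to $\B{G_1,\D_{\ker f}}$ and transfers the unit-group order through the full isomorphism, whereas you apply it to $\B{G_2,f(\D_{\ker f})}$ and match the non-unital strata directly; both reduce to the same identity $|\B{G_2,f(\D_{\ker f})}^\times|=2\,\bigl|\overline{\B{G_1,\bs{\D}{\ker f}}}\bigr|$.
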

\begin{proof}
	By Theorem $\ref{mdt1}$, we have
	\[
	|\B{G,\D}^\times| = 2 |\overline{ \B{G,\bs{\D}{\ker f}} }|\cdot|\overline{ \B{G,\bs{\D}{H}} }|.
	\]
	$|\B{G_2,f(\D_{\ker f})}^\times|= |\B{G,\D_{\ker f}}^\times|=2 |\overline{ \B{G,\bs{\D}{\ker f}} }$ by Theorem $\ref{teiri1}$. This completes the proof.
\end{proof}

\section{Application of a partial Burnside Rings}

\begin{dfn}\label{seminil}
	Let $p$ be a any prime number. For any positive integer $a$, we call a finite group $G$ $p^a$-\emph{seminilpotent} if for any subgroup $K$ of $G$ has index which is divisible by $p^a$, there exists a normal subgroup $N$ of $G$ such that $K\leq N$ and $|G:N|=p^a$, and $\#\{ N \trianglelefteq G \mid K\leq N,\ |G:N|=p^a \}\equiv 1$ mod $p$.
\end{dfn}

\begin{rem}
	seminilpotent group is a nilpotent group. However nilpotent group is not seminilpotent in general. For example let $H$ be a non-nilpotent group of odd order, and let $E$ be a $2$-group. Then $G:=H\times E$ is not nilpotent but $2$-seminilpotent.
\end{rem}

Let $G$ be a finite group. For any subgroup $K$ of $G$, we put
\[
\overline{K}:=\bigcap_{\{N \trianglelefteq G \mid K\leq N\}} N,
\] 

\begin{prop} 
	Let $G$ be a finite group. If $K$ be a subgroup of $G$, then
	\[
	\begin{split}
	\{ N \trianglelefteq G \mid K\leq N,\ |G:N|=p^a \} 
	=\{  N \trianglelefteq G \mid \overline{K}\leq N, \ |G:N|=p^a \}.
	\end{split}
	\]
\end{prop}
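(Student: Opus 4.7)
The plan is simply to exploit the defining universal property of $\overline{K}$: by construction, $\overline{K}$ is itself normal in $G$ (being an intersection of normal subgroups), it contains $K$, and it is contained in every normal subgroup of $G$ that contains $K$. In other words, $\overline{K}$ is the smallest normal subgroup of $G$ containing $K$ — the normal closure of $K$ in $G$. Once this is observed, the proposition reduces to two one-line containments.

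For the inclusion $(\subseteq)$, suppose $N \trianglelefteq G$ with $K \leq N$ and $|G:N| = p^a$. Then $N$ is one of the subgroups appearing in the family $\{N \trianglelefteq G \mid K \leq N\}$ over which $\overline{K}$ is defined as an intersection, so $\overline{K} \leq N$ automatically; the index condition is untouched. For the inclusion $(\supseteq)$, note that $K \leq \overline{K}$ directly from the definition (since $K$ is contained in every member of the intersection), so any $N$ with $\overline{K} \leq N$ satisfies $K \leq N$, and again the index is preserved.

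There is no substantive obstacle; the statement is essentially a rephrasing of the universal property of the normal closure. The only thing worth making explicit in the written proof is that $\overline{K}$ is indeed normal (so that the claim "$\overline{K} \leq N$ for every normal $N \supseteq K$" has content in the first place), which follows because intersections of normal subgroups are normal.
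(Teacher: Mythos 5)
Your proof is correct, and it is essentially the argument the paper itself records as its second, ``easy proof by group theory'': both rest on the observation that $\overline{K}$ is the smallest normal subgroup of $G$ containing $K$ (i.e.\ the normal closure of $K$), so that a normal subgroup of $G$ contains $K$ if and only if it contains $\overline{K}$, the index condition being irrelevant to the equivalence. The only cosmetic difference is that for the inclusion $\subseteq$ the paper passes through the auxiliary subgroup $N\cap\overline{K}$ and argues it must equal $\overline{K}$, whereas you note directly that such an $N$ is one of the subgroups over which the intersection defining $\overline{K}$ is taken; your version is marginally cleaner. Be aware, though, that the paper's \emph{featured} first proof takes a genuinely different route: it works in $\mathbb{C}\otimes_{\mathbb{Z}}\Omega(G,{\rm NS}(G))$ with the elements $u_N(\lambda)=[G/G]+\frac{\lambda-1}{p^a}[G/N]$ and compares the marks $\varphi_K$ and $\varphi_{\overline{K}}$ of their product to conclude that the two cardinalities are congruent modulo $n$ for every $n$, hence equal. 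That argument showcases the Burnside-ring machinery the paper is developing, but taken literally it only yields equality of cardinalities and still needs the trivial inclusion $\supseteq$ to upgrade this to equality of sets; your elementary argument delivers the set equality directly.
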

\begin{proof}
	Let $n$ be a positive integer, and $\lambda$ a primitive $n$-th root of unity. For each $N\in\{ N \trianglelefteq G \mid |G:N|=p^a \}$, we put
	\[
	u_N(\lambda):=[G/G]+\frac{\lambda-1}{p^a}[G/N] \in \mathbb{C}\otimes_\mathbb{Z}\B{G, \ns{G}}.
	\]
	For any $K\leq G$, since
	\[
	\varphi_K(u_N(\lambda))=
	\begin{cases}
	\lambda & K\leq N,\\
	1 & \text{otherwise},
	\end{cases}
	\]
	we have
	\begin{eqnarray*}
	\varphi_K(\prod_{N\in\{ N \trianglelefteq G \mid |G:N|=p^a \}}u_N(\lambda))&=&\lambda^{\#\{ N \trianglelefteq G \mid K\leq N,\ |G:N|=p^a \}}\\
	&=&\varphi_{\overline{K}}(\prod_{N\in\{ N \trianglelefteq G \mid |G:N|=p^a \}}u_N(\lambda))\\
	&=&\lambda^{\#\{ N \trianglelefteq G \mid \overline{K}\leq N,\ |G:N|=p^a \}}.
	\end{eqnarray*}
	Therefore 
	\[
	\begin{split}
	\#\{ N \trianglelefteq G \mid K\leq N,&\ |G:N|=p^a \} \\
	&\equiv\{  N \trianglelefteq G \mid \overline{K}\leq N, \ |G:N|=p^a \} \quad(\text{mod }n).
	\end{split}
	\]
	This completes the proof.
\end{proof}
\begin{proof}[Easy proof by group theory]
	
	Obviously, 
	\[
	\{ N \trianglelefteq G \mid K\leq N,\ |G:N|=p^a \} \supseteq \{  N \trianglelefteq G \mid \overline{K}\leq N, \ |G:N|=p^a \}.
	\]
	We take any $N\in\{ N \trianglelefteq G \mid K\leq N,\ |G:N|=p^a \}$. Since $N\cap \overline{K}$ is a normal subgroup of $G$ and $K\leq N\cap \overline{K}\leq \overline{K}$, $N\cap \overline{K}=\overline{K}$. Hence $\overline{K}\leq N$. Therefore 
	\[
	\{ N \trianglelefteq G \mid K\leq N,\ |G:N|=p^a \} = \{  N \trianglelefteq G \mid \overline{K}\leq N, \ |G:N|=p^a \}.
	\]
\end{proof}

We review a theorem of \cite{dsy92}.
\begin{thm}[{\cite[Theorem $1.$]{dsy92}}]
	Let $C_n$ be a cyclic group of order $n$, and let $C:=C_{|G|}$. Then the map 
	\[
	\alpha=\alpha^G : \B{C}\longrightarrow \B{G}
	\]
	given by
	$\varphi^G_K(\alpha(x))=\varphi^C_{C_{|K|}}(x)$ for each subgroup $K$ of $G$
	is a ring homomorphism. The map $\alpha$ is called the \emph{Frobenius-Wielandt homomorphism} 
\end{thm}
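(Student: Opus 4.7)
The plan is to prove that the mark formula $\varphi^G_K(\alpha(x)) = \varphi^C_{C_{|K|}}(x)$ well-defines an element $\alpha(x) \in \B{G}$ for every $x \in \B{C}$; the ring-homomorphism property then follows at once from the mark-wise identity $\varphi^G_K(\alpha(xy)) = \varphi^G_K(\alpha(x))\varphi^G_K(\alpha(y))$ together with injectivity of $\varphi^G$. Uniqueness of $\alpha(x)$ is immediate, so the only real task is existence: for each $x \in \B{C}$, the tuple $\bigl(\varphi^C_{C_{|K|}}(x)\bigr)_{(K) \in \sub{G}^c}$ must lie in the image of $\varphi^G$. By $\Z$-linearity I reduce to $x = [C/C_d]$ for a divisor $d$ of $n := |G|$; since $C$ is abelian, $\varphi^C_{C_e}([C/C_d])$ equals $n/d$ when $e \mid d$ and vanishes otherwise, so the candidate marks are $m_K = n/d$ if $|K| \mid d$ and $m_K = 0$ otherwise.

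To show that $(m_K)$ lies in $\varphi^G(\B{G})$, I would invoke the classical Dress--Gluck criterion: $(m_K)_{(K) \in \sub{G}^c}$ arises as the mark tuple of some element of $\B{G}$ iff
\[
\sum_{gK \in N_G(K)/K} m_{\langle g, K \rangle} \equiv 0 \pmod{|N_G(K):K|}
\]
holds for every $K \leq G$. Fix such a $K$ and set $e := |K|$, $H := N_G(K)/K$, $m := |H|$. If $e \nmid d$ then $e \mid |\langle g, K\rangle|$ forces $m_{\langle g, K\rangle} = 0$ for every $g$, so the congruence is trivial. Otherwise $|\langle g, K\rangle| = e \cdot \mathrm{ord}(gK)$, and the sum collapses to $(n/d) \cdot \#\{gK \in H : (gK)^{d/e} = 1\}$. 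Frobenius's classical theorem applied to $H$ guarantees that this count is divisible by $\gcd(d/e, m)$, so it suffices to verify $m \mid (n/d)\gcd(d/e, m)$; rewriting via $\gcd \cdot \mathrm{lcm} = $ product reduces this to $\mathrm{lcm}(d/e, m) \mid n/e$, and both $d/e \mid n/e$ (from $d \mid n$) and $m \mid n/e$ (from $|N_G(K)| \mid n$) hold.

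The main obstacle is precisely the arithmetic step above: promoting the obvious cyclic-group congruences into Dress congruences for $G$ requires Frobenius's divisibility theorem on each Weyl group $N_G(K)/K$, and this is what the name \emph{Frobenius--Wielandt} commemorates. Everything else in the proof, namely uniqueness, reduction to basis elements, and the passage from well-definedness to multiplicativity, is purely formal.
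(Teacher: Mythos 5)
The paper offers no proof of this statement: it is quoted from \cite{dsy92} and used as a black box, so there is no internal argument to compare yours against. Your proof is correct, and it is essentially the standard route (also the one underlying the cited reference): reduce by $\Z$-linearity to the basis elements $[C/C_d]$, compute the candidate mark tuple $m_K = n/d$ for $|K|\mid d$ and $m_K=0$ otherwise, and verify the Dress/tom Dieck congruences $\sum_{gK\in N_G(K)/K} m_{\langle g\rangle K}\equiv 0 \pmod{|N_G(K):K|}$ characterizing the image of $\varphi^G$. I checked the key steps: when $e=|K|\nmid d$ every term vanishes; when $e\mid d$ the sum equals $(n/d)\cdot\#\{h\in N_G(K)/K : h^{d/e}=1\}$, Frobenius's theorem makes that count divisible by $\gcd(d/e,m)$ with $m=|N_G(K):K|$, and your reduction of $m\mid (n/d)\gcd(d/e,m)$ to $\mathrm{lcm}(d/e,m)\mid n/e$ is valid since $d\mid n$ and $|N_G(K)|\mid n$. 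The formal parts (uniqueness and multiplicativity via injectivity of the mark homomorphism, and $\alpha(1)=1$ because all marks of $\alpha([C/C])$ equal $1$) are fine. The one thing to make explicit is that the argument rests on two external results that must be cited precisely: the congruence characterization of $\varphi^G(\B{G})$ and Frobenius's divisibility theorem on the Weyl groups $N_G(K)/K$. As \cite{dsy92} itself emphasizes, the existence of $\alpha$ is essentially equivalent to Frobenius's theorem, so some input of this strength is unavoidable, and your identification of it as the crux is accurate.
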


\begin{thm}\label{imgfw}
	Let $G$ be a finite group of even order $n$, and $C$ be a cyclic group of the same order $n$. if $G$ is $2$-seminilpotent, then 
	\[
	\alpha(\B{C}^\times)\subseteq \B{G,\ns{G}}^\times.
	\]
	In particular, 
	\[
	\alpha(\B{C}^\times) =\braket{-1_{\B{G}},\ \prod_{\{N\leq G;|G:N|=2\}}u_H},
	\]
	where $u_N=1_{\B{G}}-[G/H]$.
\end{thm}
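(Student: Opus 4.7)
The plan is to compute the images of generators of $\B{C}^\times$ directly from the Frobenius--Wielandt marks formula and compare them with the marks of the right-hand product, using the $2$-seminilpotent hypothesis as the parity input that forces the two sides to agree. First, since $C$ is cyclic of even order $n$, Proposition~\ref{seiki} (in which $\ns{C}=\sub{C}$) gives $\B{C}^\times = \{\pm 1_{\B{C}},\ \pm u\}$, where $u := 1_{\B{C}} - [C/D]$ and $D$ is the unique subgroup of $C$ of index $2$. Since $\alpha$ is a ring homomorphism, $\alpha(\pm 1_{\B{C}}) = \pm 1_{\B{G}}$ is automatically in $\B{G,\ns{G}}^\times$, and the theorem reduces to establishing the identity
\[
\alpha(u) \;=\; \prod_{N\,\trianglelefteq\, G,\ |G:N|=2} u_N
\]
inside $\B{G,\ns{G}}$, the right-hand side being a unit by Proposition~\ref{seiki} applied to $G$.

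Next, for every $K\leq G$, the Frobenius--Wielandt identity $\varphi^G_K(\alpha(u)) = \varphi^C_{C_{|K|}}(u)$ combined with the observation that $\#{\rm inv}_{C_d}(C/D) = 2$ when $d$ divides $n/2$ and $0$ otherwise yields
\[
\varphi^G_K(\alpha(u)) \;=\; \begin{cases} -1 & \text{if }|G:K|\text{ is even,}\\ +1 & \text{if }|G:K|\text{ is odd.} \end{cases}
\]
For the product side, each $u_N$ with $|G:N|=2$ satisfies $\varphi^G_K(u_N) = 1 - \#{\rm inv}_K(G/N) = -1$ if $K\leq N$ and $+1$ otherwise, so
\[
\varphi^G_K\!\left(\prod_{N} u_N\right) \;=\; (-1)^{m(K)}, \qquad m(K) := \#\{N\trianglelefteq G : K\leq N,\ |G:N|=2\}.
\]

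The matching step is where the hypothesis enters: if $|G:K|$ is odd, then no normal subgroup of index $2$ can contain $K$, so $m(K)=0$ and the sign is $+1$; if $|G:K|$ is even, Definition~\ref{seminil} with $p=2$, $a=1$ forces $m(K)\equiv 1\pmod 2$, so the sign is $-1$. These values coincide with $\varphi^G_K(\alpha(u))$ for every $K\leq G$. Because the Burnside homomorphism on $\B{G}$ is injective and $\prod_N u_N\in\B{G,\ns{G}}\subseteq\B{G}$, we conclude $\alpha(u) = \prod_N u_N$ in $\B{G}$; in particular $\alpha(u)\in \B{G,\ns{G}}^\times$. Combined with $\alpha(-1_{\B{C}}) = -1_{\B{G}}$, this yields both the inclusion $\alpha(\B{C}^\times)\subseteq \B{G,\ns{G}}^\times$ and the explicit generating description $\braket{-1_{\B{G}},\ \prod_N u_N}$.

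The main obstacle is recognising that the $2$-seminilpotent condition is precisely the group-theoretic input needed to control the parity of $m(K)$ uniformly across all subgroups $K$ of even index; once this parity is secured, the rest is routine bookkeeping with marks and the injectivity of $\varphi^G$.
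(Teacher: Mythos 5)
Your proposal is correct and follows essentially the same route as the paper: compute the marks of $\alpha(1_{\B{C}}-[C/C_{n/2}])$ via the Frobenius--Wielandt formula, compare with the marks of $\prod_N u_N$, use $2$-seminilpotency to control the parity of $\#\{N\trianglelefteq G : K\leq N,\ |G:N|=2\}$, and conclude by injectivity of the Burnside homomorphism. If anything, your write-up is slightly more explicit than the paper's in isolating the congruence $m(K)\equiv 1\pmod 2$ as the precise point where the hypothesis is used.
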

\begin{proof}
	By Proposition $\ref{seiki}$, $\B{C}^\times=\braket{-1_{\B{C}}, 1_{\B{C}}-[C/C_{n/2}]}$. Since
	\[
	\varphi^G_K(\alpha(1_{\B{C}}-[C/C_{n/2}]))=\varphi^C_{C_{|K|}} (1_{\B{C}}-[C/C_{n/2}])=
	\begin{cases}
	-1& 2|K|\text{ divides }n,\\
	1&\text{othewise},
	\end{cases}
	\]
	it follows that
	\[
	\varphi^G_K(\alpha(1_{\B{C}}-[C/C_{n/2}]))=
	\begin{cases}
	-1 & |G:K| \text{ is even},\\
	1 & \text{othewise}.
	\end{cases}
	\]
	For each $H\leq G$ has index $2$ in $G$, 
	\[
	\varphi_K(u_N)=
	\begin{cases}
	-1 & K\leq N,\\
	1 & \text{otherwise}.
	\end{cases}
	\]
	where $u_H=[G/G]-[G/H]$. Since $G$ is a $2$-seminilpotent, for any $K\leq G$ has even index in $G$, there exist $N\leq G$ such that $|G:N|=2$. Therefore
	\begin{eqnarray*}
	\varphi^G_K(\prod_{\{N\leq G;|G:N|=2\}}u_N)&=&(-1)^{\{N\leq G \mid K\leq N, |G:N|=2\}}\\
	&=&
	\begin{cases}
		-1 & |G:K| \text{ is even},\\
		1 & \text{othewise}.
	\end{cases}
	\end{eqnarray*}

	 By injectivity of the Burnside homomorphism, completing the proof.

\end{proof}

\begin{rem}
	If $G$ is a finite group of odd order $n$ and $C$ is a cyclic group of the same order $n$, then 
	$\B{C}^\times=\braket{-1_{\B{C}}}$ by Proposition $\ref{seiki}$. Therefore $\alpha(\B{C}^\times)\subseteq \B{G,\ns{G}}^\times$
\end{rem}
\begin{rem}	
	In general, for any finite group $G$ and $C=C_{|G|}$, it is not necessarily $\alpha(\B{C}^\times)\subseteq \B{G,\ns{G}}^\times$ (see Example $\ref{hanrei}$).
\end{rem}

\end{document}